\documentclass[10pt]{article}

\usepackage{amsmath,amssymb,amsthm,mathtools}
\usepackage[margin=1in]{geometry}

\input{tcilatex}

\title{\textbf{On Standard perturbations of the Affine twist map}}

\author{Salvador Addas-Zanata\thanks{ Salvador would like to thank FAPESP (project 2023/07076-4) for partial funding.}}

\date{}

\begin{document}

\maketitle

\centerline { {\sl Instituto de Matem\'atica e Estat\'\i stica }}

\centerline {{\sl Universidade de S\~ao Paulo}}

\centerline {{\sl Rua do Mat\~ao 1010, Cidade Universit\'aria,}} 

\centerline {{\sl 05508-090 S\~ao Paulo, SP, Brazil}}

\begin{abstract}
For any $t\in\mathbb{R}$, consider the Affine twist map $\rm{Aff_t}:\mathbb{T}^2\to\mathbb{T}^2$ given by $$\rm{Aff_t}(x,y)=(x+y \text{ mod 1}, y+t \text{ mod 1}).$$ The map $\rm{Aff_t}$ clearly possesses an invariant foliation by horizontal curves. If $t$ is rational, each leaf is periodic, and if $t$ is irrational, the orbit of each leaf is dense in the torus. In both cases, the vertical rotation set of the proper lift of $\rm{Aff_t}$ to the vertical cylinder is reduced to $\{t\}.$ Now, for $k\in\mathbb{R},$ define $$f_{k,t}(x,y):=(x+y+k\sin (2\pi x)\text{ mod 1},y+k\sin (2\pi x)+t\text{ mod 1}).$$ 
We show that:
\begin{enumerate}
\item when $t=p/q$ for integers $p$ and $q>0$, KAM theory implies the existence of a constant $k_{p/q}>0$ such that for $|k|<k_{p/q}$, the vertical rotation set of an adequate lift of $f_{k,p/q}$ is 
just $\{p/q\}.$

\item when $t$ is irrational, for any $k\neq 0$ the vertical rotation set of the adequate lift of $f_{k,t}$ is a non-degenerate interval which contains $t$ in its interior. 
\end{enumerate}
In other words, it is not easy to build area-preserving twist maps whose vertical rotation sets are reduced to a single irrational number. From Theorem A of \cite{eujul}, such a map needs to have an invariant foliation by Lipschitz graphs over the horizontal coordinate. In particular, all its iterates must satisfy a twist condition. This is precisely what does not hold for $f_{k,t}$, for all non-zero values of $k$.
\medskip

\noindent\textbf{Keywords:} twist maps, vertical rotation set, foliations, twist condition.

\end{abstract}

\tableofcontents

\section{Introduction}

Rotation sets describe the possible asymptotic displacements of orbits and provide a link between the topology and the dynamics of surface homeomorphisms. For homeomorphisms of the torus homotopic to a Dehn twist, the relevant rotation set is one-dimensional: it records the average vertical displacement of orbits of a lift to the cylinder and is always a nonempty compact interval, see
\cite{non04} and \cite{alunofranks}.
 A natural question is how the degeneration of this interval to a single point constrains the dynamics, and whether this property persists under perturbations.

The affine twist map

$$
\operatorname{Aff}_t(x,y)
=(x+y\bmod 1,\;y+t\bmod 1), 
\qquad t\in\mathbb R,  
$$
provides a simple model for this question. It preserves area and the foliation of the torus by horizontal circles, and its natural lift to \(\mathbb T^1\times\mathbb R\) has vertical rotation set \(\{t\}\). The map induced on the space of leaves is the circle rotation by \(t\). Thus, when \(t\) is rational every leaf is periodic, whereas for irrational \(t\) the orbit of every leaf is dense in the torus. Although the vertical rotation set is a singleton in both cases, we show that its response to the perturbations considered below is very different.

We study the two-parameter family

\begin{eqnarray}
f_{k,t}(x,y)
=\bigl(x+y+k\sin(2\pi x)\bmod 1,\;
y+k\sin(2\pi x)+t\bmod 1\bigr),
\qquad k,t\in\mathbb R. \label{deffkt}
\end{eqnarray}

For \(k=0\) this is the affine twist map, while for \(t=0\) it is the standard map. Every member of the family preserves area and satisfies the twist condition. We choose the cylinder lift \(\widehat f_{k,t}\) whose vertical displacement is \(k\sin(2\pi x)+t\). Since the sine has zero average, the vertical rotation number of Lebesgue measure remains equal to \(t\) for every \(k\). The question is whether all orbits must share this same average vertical displacement. 

Our main theorem establishes a sharp distinction between rational and irrational parameters. If \(t=p/q\), then for all sufficiently small \(|k|\), depending on \(p/q\), the vertical rotation set of \(\widehat f_{k,p/q}\) is still \(\{p/q\}\). Indeed, KAM theory supplies essential invariant curves for the corrected iterate \(\widehat f_{k,p/q}^{\,q}-(0,p)\). These curves and their vertical integer translates bound the vertical motion and force the rotation set to remain a singleton. In contrast, if \(t\) is irrational, then

$$
t\in\operatorname{int}\rho_V(\widehat f_{k,t})
\qquad\text{for every }k\ne0.
$$

This conclusion holds for every irrational \(t\), and for every nonzero perturbation amplitude, however small or large.

It is well-known that existence of interior for rotation sets imply complicated dynamics, like positive topological entropy, and in the $C^{1+\varepsilon}$ world (see \cite{c1eps}), lots of hyperbolic periodic points with rotational horseshoes, a nice description of the so called Region of Instability in the area-preserving case, etc.

The proof of the irrational case uses the rigidity of area-preserving twist maps with a single irrational vertical rotation number proved in \cite{eujul}. There, it was shown that such maps preserve a foliation by Lipschitz graphs over the horizontal coordinate. A consequence is that every positive iterate of the map satisfies the topological twist condition. We show that this necessary condition fails for \(f_{k,t}\) whenever \(k\ne0\): for every such \(k\) and every \(t\), there exist an integer \(n>0\) and a point \(z\in\mathbb R^2\) such that

$$
\frac{\partial}{\partial y}
\bigl(p_1\circ\widetilde f_{k,t}^{\,n}\bigr)(z)<0.
$$

The argument relates the derivatives of the iterates to the leading principal determinants of a symmetric tridiagonal matrix. A variational construction, using an ergodic component of Lebesgue measure, produces a negative value of the associated quadratic form and hence the desired failure of the twist condition. Finally, bounds on vertical deviations for measures whose rotation numbers lie at an endpoint of the rotation interval show that the Lebesgue rotation number \(t\) belongs to its interior.

After introducing the notation and stating the main theorem, we collect the required results on invariant foliations, vertical deviations and the twist condition in Section 2. Section 3 contains the proof, treating the rational case by KAM theory and the irrational case by the variational argument described above.

\subsection{Definitions}

\begin{description}
\item[(1)]  Let $\mathbb{T}^{1}:=\mathbb{R}/\mathbb{Z}$ and $\mathbb{T}^{2}:=%
\mathbb{R}^{2}/\mathbb{Z}^{2}$ be the flat torus. A point in $\mathbb{R}^{2}$
will be denoted as $\widetilde{z}=(\widetilde{x},\widetilde{y})$, in the
cylinder $\mathbb{T}^{1}\times \mathbb{R}$ as $\widehat{z}=(\widehat{x},%
\widehat{y})$, and a point in $\mathbb{T}^{2}$ will be denoted as $z=(x,y)$.
Let $p_{1},p_{2}:\mathbb{R}^{2}\to \mathbb{R}$ denote the canonical
projections, given by $p_{1}(\widetilde{x},\widetilde{y}):=\widetilde{x}$
and $p_{2}(\widetilde{x},\widetilde{y}):=\widetilde{y}$. The projections on
the torus and on the cylinder will be denoted in the same way. Finally, let $%
p:\mathbb{R}^{2}\to \mathbb{T}^{2}$, $\pi :\mathbb{R}^{2}\to \mathbb{T}%
^{1}\times \mathbb{R}$, and $\tau :\mathbb{T}^{1}\times \mathbb{R}\to %
\mathbb{T}^{2}$ be the covering maps defined as $p(\widetilde{x},\widetilde{y%
}):=(\widetilde{x}\bmod1,\widetilde{y}\bmod1)$, $\pi (\widetilde{x},%
\widetilde{y}):=(\widetilde{x}\bmod1,\widetilde{y}),$ and $\tau (\widehat{x},%
\widehat{y}):=(\widehat{x},\widehat{y}\bmod1).$

\item[(2)]  For $\ell \in \mathbb{Z}\setminus \{0\}$, let $\mathrm{Diff}_{%
\mathrm{tw}}^{1}(\mathbb{T}^{2})$ denote the set of $C^{1}$-diffeomorphisms
of $\mathbb{T}^{2}$ that are homotopic to a \emph{Dehn twist}: $(x,y)\mapsto
(x+\ell y\bmod1,y\bmod1)$, and satisfy a \emph{twist condition}: there
exists a constant $k_{tw}>0$ such that, for any lift $\widetilde{f}:%
\mathbb{R}^{2}\to \mathbb{R}^{2}$ of $f$, 
\begin{equation}
\frac{\ell }{|\ell |}\times \frac{\partial p_{1}\circ \widetilde{f}(%
\widetilde{x},\widetilde{y})}{\partial \widetilde{y}}>k_{tw}\quad \text{for
all }(\widetilde{x},\widetilde{y})\in \mathbb{R}^{2}.  \label{equaI02}
\end{equation}
If $\ell >0$, we say that $f$ is a \emph{right twist map}, and when it is
smaller than zero, it is a left one. The inverse of a right twist map is a
left one, and vice-versa.

We also denote by $\mathrm{Diff}_{\mathrm{tw}}^{1}(\mathbb{R}^{2})$
(respectively, $\mathrm{Diff}_{\mathrm{tw}}^{1}(\mathbb{T}^{1}\times %
\mathbb{R})$) the set of diffeomorphisms of the plane (respectively, of the
cylinder) that are lifts of elements of $\mathrm{Diff}_{\mathrm{tw}}^{1}(%
\mathbb{T}^{2})$. If an element of $\mathrm{Diff}_{\mathrm{tw}}^{1}(%
\mathbb{T}^{2})$ is denoted $f$, any lift of $f$ to the vertical cylinder is
denoted as $\widehat{f}$ and to the plane as $\widetilde{f}$.

\item[(3)]  For any $\widehat{f}\in \mathrm{Diff}_{\mathrm{tw}}^{1}(%
\mathbb{T}^{1}\times \mathbb{R})$, we define the \emph{vertical rotation set}
of $\widehat{f}$ as in \cite{alunofranks} or \cite{non04}, 
\begin{equation}
\rho _{V}(\widehat{f}):=\bigcap_{i=1}^{\infty }\overline{\bigcup_{n\geq
i}\left\{ \dfrac{p_{2}\circ \widehat{f}^{n}(\widehat{z})-p_{2}(\widehat{z})}{%
n}:\widehat{z}\in \mathbf{T}^{1}\times \mathbb{R}\right\} }\subset \mathbb{R}%
.  \label{equaI03}
\end{equation}
It was proved in \cite{alunofranks} and \cite{non04} that $\rho _{V}(%
\widehat{f})\neq \emptyset $ is compact and connected, therefore it is
either a singleton or a non-degenerate compact interval. 

\item[(4)]  We denote by $\mathcal{F}_{G}$ a foliation of $\mathbb{T}^{2}$
by simple closed curves, all homotopic to $(1,0)$, each of which is the
graph of a $K$-Lipschitz function of the $x$-coordinate, for some constant $%
K=K(\mathcal{F}_{G})>0$.
\end{description}

\subsection{Main theorem}

For real numbers $k,t,$ let us consider the Standard perturbation of the Affine twist map, 
$f_{k,t}:\mathbb{T}^2\rightarrow \mathbb{T}^2,$ defined in expression (\ref{deffkt}), and the fixed lifts 
of $f_{k,t}$ to the vertical cylinder and to the plane: 
$$
\widehat{f}_{k,t}:\mathbb{T}^{1}\times \mathbb{R}\rightarrow \mathbb{T}^{1}\times \mathbb{R}
\text{ given by }\widehat{f}_{k,t}(\widehat{x},\widehat{y}):=(\widehat{x}+\widehat{y}+k\sin
(2\pi \widehat{x}) \text{ mod 1},\widehat{y}+k\sin (2\pi \widehat{x})+t)
$$
$$
\text{ and }
$$
$$
\widetilde{f}_{k,t}:\mathbb{R}^2\rightarrow \mathbb{R}^2
\text{ given by }\widetilde{f}_{k,t}(\widetilde{x},\widetilde{y}):=(\widetilde{x}+%
\widetilde{y}+k\sin (2\pi \widetilde{x}),\widetilde{y}+k\sin (2\pi 
\widetilde{x})+t) 
$$

Note that $f_{k,t}$ preserves area and belongs to $\mathrm{Diff}_{\mathrm{tw}%
}^{1}(\mathbb{T}^{2})$ for all values of $k$ and $t.$

\begin{theorem}
\label{main} Given $t\in \mathbb{R},$ there are two possibilities:

\begin{enumerate}
\item  if $t$ is a rational $p/q$ for integers $p$ and $q>0,$
then there exists $k_{p/q}>0$ such that for $\left| k\right| <k_{p/q},$ the
vertical rotation set $\rho _{V}(\widehat{f}_{k,p/q})=\{p/q\}$ and the map $%
\left( \widehat{f}_{k,p/q}\right) ^{q}(\bullet )-(0,p)$ has KAM invariant
curves, that is, for certain smooth functions $\phi :\mathbb{T}%
^{1}\rightarrow \mathbb{R},$ 
\[
\left( \left(\widehat{f}_{k,p/q}\right)^q(\bullet )-(0,p)\right) (graph(\phi ))=graph(\phi
).
\]

\item  if $t$ is irrational, then for all $k\neq 0,$ $\rho _{V}(\widehat{f}%
_{k,t})$ is a closed non-degenerate interval containing $t$ in its interior.
\end{enumerate}
\end{theorem}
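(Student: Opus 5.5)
The proof splits according to whether $t$ is rational, and both halves follow the outline in the introduction.

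\emph{Rational case.} Let $t=p/q$ and set $F_k:=(\widehat f_{k,p/q})^q(\bullet)-(0,p)$. At $k=0$ we have
\[
F_0(\widehat x,\widehat y)=\bigl(\widehat x+q\widehat y+q(q-1)p/(2q),\ \widehat y\bigr),
\]
which is an integrable twist map with nondegenerate twist $q\neq 0$. For small $|k|$, $F_k$ is a real-analytic, exact area-preserving perturbation of $F_0$. Exactness holds because the vertical displacement of $\widehat f_{k,t}$ is $k\sin(2\pi x)+t$ and the sine term has zero mean, so the Calabi/flux obstruction vanishes. The Moser twist theorem, applied on a fundamental domain $\mathbb{T}^1\times[0,1]$, then gives, for $|k|<k_{p/q}$, an essential invariant curve $\mathrm{graph}(\phi)$ with Diophantine rotation number. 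Since $F_k$ commutes with $(\widehat x,\widehat y)\mapsto(\widehat x,\widehat y+1)$, the translates $\mathrm{graph}(\phi)+(0,j)$ are also invariant. Every orbit of $F_k$ is trapped between two consecutive translates, so $p_2\circ F_k^n-p_2$ is bounded by a constant. Hence $\rho_V(F_k)=\{0\}$, and therefore $\rho_V(\widehat f_{k,p/q})=\{p/q\}$.

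\emph{Irrational case.} Suppose, for contradiction, that $\rho_V(\widehat f_{k,t})$ is a singleton. Lebesgue measure is invariant and has rotation number $t$ (the sine averages to zero), so the singleton is $\{t\}$ with $t$ irrational. Theorem A of \cite{eujul} then gives an invariant foliation $\mathcal F_G$ by Lipschitz graphs. One consequence is that every iterate $\widetilde f^n_{k,t}$ satisfies the topological twist condition, i.e.\ $\partial_y(p_1\circ\widetilde f^n)\ge 0$. I will contradict this by finding $n$ and $z$ with $\partial_y(p_1\circ\widetilde f^n)(z)<0$.

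To do so, write $x_{j+1}-2x_j+x_{j-1}=k\sin(2\pi x_j)+t$. Differentiating along an orbit, $\delta x_n$ (for a variation with $\delta x_0=0$, $\delta y_0=1$) satisfies the Jacobi recursion
\[
\delta x_{j+1}=\bigl(2+2\pi k\cos(2\pi x_j)\bigr)\delta x_j-\delta x_{j-1}.
\]
Thus $\partial_y(p_1\circ\widetilde f^n)$ equals the leading principal determinant $D_{n-1}$ of the symmetric tridiagonal matrix with diagonal entries $2+2\pi k\cos(2\pi x_j)$ and off-diagonal entries $-1$. If all $D_m>0$ along every orbit, the associated quadratic form
\[
Q(\xi)=\sum_j(\xi_{j+1}-\xi_j)^2-2\pi k\sum_j\cos(2\pi x_j)\,\xi_j^2
\]
is positive definite on every finite window of every orbit.

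To violate this, I take a test vector $\xi_j=1$ for $1\le j\le N$ with linear ramps of length $L$ at both ends. The gradient term then costs $O(1/L)$, while the potential term is $-2\pi k\sum\cos(2\pi x_j)$. For this I need an orbit segment along which $\sum_{j\le N} k\cos(2\pi x_j)\ge cN$ for some $c>0$. Since $\int k\cos(2\pi x)\,dx=0$, the segment cannot come from a typical Lebesgue point; instead I exploit the twist and irrational $t$. The foliation leaves are invariant Lipschitz graphs whose dynamics is conjugate to rotation by $t$. Working inside an ergodic component of Lebesgue measure, I would show that $\cos(2\pi x)$ cannot have zero Birkhoff average on every invariant measure supported there. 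Otherwise $k\sin(2\pi x)$ would be a coboundary along the leaves, which I would rule out with the relation $y_{j+1}-y_j=k\sin(2\pi x_j)+t$. Given a measure $\mu$ with $\int k\cos(2\pi x)\,d\mu>0$ (or a Birkhoff average bounded away from zero), choosing $N\gg L\gg 1$ makes $Q<0$, so some $D_m<0$, which is the contradiction. This step is the main obstacle. It requires producing positivity of $k\cos$ on average along some orbit using only the assumed rigid structure, and the cleanest route may be to use the Lipschitz-graph dynamics to find arbitrarily long orbit segments near the line $x=0$ (for $k>0$) or near $x=1/2$ (for $k<0$).

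Having shown $\rho_V$ is a non-degenerate interval, it remains to show that $t$ lies in its interior. Suppose $t$ were an endpoint, say the maximum. By the bounded-deviation results for measures realizing endpoints of the rotation interval (collected in Section~2), the ergodic components of Lebesgue measure with rotation number $t$ would have uniformly bounded vertical deviation. Since Lebesgue measure has full support and vertical average exactly $t$, every orbit would then have bounded deviation from $nt$. That would force the rotation set to be $\{t\}$, contradicting non-degeneracy.
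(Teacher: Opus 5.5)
Your rational case agrees with the paper. So does your reduction of the irrational case: exhibit a negative leading principal determinant of the Jacobi matrix, contradict the consequence of Theorem A of \cite{eujul}, then use the endpoint bounded-deviation lemma for Lebesgue measure to put $t$ in the interior. The gap is the step you call the main obstacle, which is the heart of the argument: you never produce an orbit segment and a vector on which the quadratic form is negative.

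First, your form has the wrong sign. With diagonal entries $2+2\pi k\cos(2\pi x_j)$ one has $\langle H_N\xi,\xi\rangle=\sum_{j=0}^{N}(\xi_{j+1}-\xi_j)^2+2\pi k\sum_{j=1}^{N}\cos(2\pi x_j)\,\xi_j^2$ with $\xi_0=\xi_{N+1}=0$. So you need $k\cos(2\pi x)$ to be \emph{negative} on average: near $x=1/2$ when $k>0$, where the diagonal entries drop below $2$, not near $x=0$.

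More seriously, your plateau-with-ramps vector needs Birkhoff sums of $k\cos(2\pi x)$ that grow linearly, with the right sign, along some orbit. Nothing excludes that every ergodic component of Lebesgue measure gives $\cos(2\pi x)$ zero integral. In that case those sums are $o(N)$ and your estimate yields nothing. Your proposed remedy does not help. You give no link between vanishing averages of $\cos$ and $\sin$ being a coboundary. In fact, under your contradiction hypothesis $k\sin(2\pi x)$ \emph{is} a continuous coboundary: if $\Theta$ is the lifted leaf coordinate with $\Theta\circ\widehat f_{k,t}=\Theta+t$, then $h=\widehat y-\Theta$ descends to $\mathbb{T}^2$. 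The relation $y_{j+1}-y_j=k\sin(2\pi x_j)+t$ then says exactly that $h\circ f_{k,t}-h=k\sin(2\pi x)$. The idea of long orbit segments near a vertical line is likewise not substantiated.

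The missing idea is a second-order test vector. With it, the failure of the twist condition holds unconditionally, with no contradiction hypothesis. Take $k>0$; the conjugacy $(x,y)\mapsto(x+1/2,y)$ exchanges $f_{k,t}$ and $f_{-k,t}$. Since $\int\cos(2\pi x)\,d\mathrm{Leb}=0<\int\cos^2(2\pi x)\,d\mathrm{Leb}$, the ergodic decomposition of Lebesgue measure gives an ergodic $\mu$ with one of two properties:
\begin{enumerate}
\item $\int\cos(2\pi x)\,d\mu<0$. Then $v_j\equiv1$ along a $\mu$-generic orbit gives $\langle H_Nv,v\rangle=2+2\pi k\sum_{j=1}^{N}\cos(2\pi x_j)\to-\infty$.
\item $\int\cos(2\pi x)\,d\mu=0$ and $M_\mu=\int\cos^2(2\pi x)\,d\mu>0$.
\end{enumerate}
The second case is the one your approach cannot reach. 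There, set $v_j=1-\varepsilon c_j$ with $c_j=\cos(2\pi x_j)$ along a $\mu$-generic orbit. The potential term equals $2\pi k\sum_j(c_j-2\varepsilon c_j^2+\varepsilon^2c_j^3)=2\pi kN\bigl(-2\varepsilon M_\mu+O(\varepsilon^2)+o(1)\bigr)$. The kinetic term is at most $8+4\varepsilon^2N$. So for a small fixed $\varepsilon$ and large $N$ the form is negative, because the first-order gain $-4\pi k\varepsilon M_\mu N$ beats the $O(\varepsilon^2N)$ cost.

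Finally, $Q<0$ only shows that not all $D_m$ are positive. You also need that if the first nonpositive $D_{j^*}$ vanishes, then $D_{j^*+1}=-D_{j^*-1}<0$.
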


\begin{remark}
If instead of $\sin (2\pi x),$ we consider any 1-periodic continuous
function $\psi :$ $\mathbb{T}^{1}\rightarrow \mathbb{R}$ satisfying $\int_{%
\mathbb{T}^{1}}\psi (x)dx=0,$ then the following happens: if $\psi $ is
at least $C^{1}$ and non constant, in case of irrational $t,$ we get
the same conclusion as above, with the same proof. And in case $\psi $ is at least $C^{5},$ the
rational case also behaves as above, again from KAM theorem, see \cite{kam} page 680.
\end{remark}

\begin{corollary} As the extremes of the vertical rotation set vary continuously with respect to the parameters 
$k$ and $t,$ for any given  sequence $p_n/q_n$ converging to some irrational number, if $k_n>0$ satisfies 
$$\rho _{V}(\widehat{f}_{k_n,p_n/q_n})=\{p_n/q_n\}, \text{ then } k_n\overset{n\rightarrow \infty}\rightarrow 0.$$ 
\end{corollary}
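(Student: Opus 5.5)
The statement to prove is the Corollary. I would argue by contradiction, using the continuity of the endpoints of $\rho_V$ together with part 2 of Theorem \ref{main}.

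First, I fix notation for the two extremes. Write
\[
\rho_V(\widehat f_{k,t})=[a(k,t),b(k,t)].
\]
The Corollary's hypothesis is that $a$ and $b$ depend continuously on $(k,t)$. Since $f_{k,t}$ depends smoothly on $(k,t)$ in the $C^0$ (indeed $C^1$) topology, I would justify this as follows. Upper semicontinuity of the rotation set holds in general: the extremes are realized by ergodic measures, and limits of invariant measures are invariant. Continuity in this area-preserving twist setting would come from the results of \cite{alunofranks} and \cite{non04}, which the Corollary takes as given.

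Next, I set up the contradiction. Let $p_n/q_n\to\alpha\notin\mathbb{Q}$ with $\rho_V(\widehat f_{k_n,p_n/q_n})=\{p_n/q_n\}$, and suppose $k_n\not\to 0$. Then, after passing to a subsequence, either $k_n\to k_*\in(0,\infty)$ or $k_n\to\infty$.

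In the bounded case, continuity gives
\[
a(k_*,\alpha)=\lim_n a(k_n,p_n/q_n)=\lim_n p_n/q_n=\alpha,
\]
and likewise $b(k_*,\alpha)=\alpha$. So $\rho_V(\widehat f_{k_*,\alpha})=\{\alpha\}$ with $k_*\neq 0$ and $\alpha$ irrational. This contradicts Theorem \ref{main}, part 2.

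The main obstacle is the unbounded case $k_n\to\infty$, where continuity alone gives nothing. I would handle it directly by showing that for $|k|$ large the rotation set is never a singleton, for any $t$. Take $|k|>1/(2\pi)$. The function $x\mapsto x+k\sin(2\pi x)$ has degree one, so it has fixed points modulo integers, and also points where it equals $x+1$. Choosing $\widehat y$ suitably then gives two points whose orbits behave differently:
\begin{itemize}
\item a point with $\widehat x$ fixed and vertical displacement $k\sin(2\pi\widehat x)+t$ per step, which is a periodic orbit of $f$ (a fixed point after adjusting $\widehat y$ by an integer);
\item a second such orbit with a different vertical displacement, namely $k\sin(2\pi x_1)+t$ at another solution $x_1$.
\end{itemize}
Concretely, I would pick $x_0,x_1$ with $k\sin(2\pi x_i)+t\in\mathbb{Z}$ taking two distinct integer values $m_0\neq m_1$; this is possible once $|k|\ge 1$. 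Then set $\widehat y_i=-k\sin(2\pi x_i)$ modulo $1$ appropriately, so that $\widehat x$ is fixed. These are two fixed points of $f$ whose lifts have vertical rotation numbers $m_0\neq m_1$. Hence $\rho_V$ is non-degenerate, contradicting $\rho_V=\{p_n/q_n\}$.

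Finally, the negative-$k$ case is symmetric, since the Corollary only assumes $k_n>0$. Combining the bounded and unbounded cases gives $k_n\to 0$.
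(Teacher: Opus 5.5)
Your argument is correct. In the bounded case it is exactly the reasoning the paper compresses into ``as the extremes \dots vary continuously'': a subsequence $k_n\to k_*>0$ would give $\rho_V(\widehat f_{k_*,\alpha})=\{\alpha\}$ with $\alpha$ irrational, contradicting part 2 of Theorem \ref{main}.

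What you add is the case of unbounded $k_n$. Continuity at finite parameters cannot reach it, and the paper's one-line justification does not address it. Your construction handles it correctly. If $k\sin(2\pi x_i)+t=m_i\in\mathbb{Z}$ and $\widehat y_i=t-m_i$, then $\widehat f_{k,t}(x_i,\widehat y_i)=(x_i,\widehat y_i+m_i)$. Once $|k|\ge 1$, the interval $[t-|k|,t+|k|]$ contains two distinct integers, so the rotation set is not a singleton. This actually shows $k_n<1$ for every $n$, so the dichotomy of subsequences is unnecessary. The preliminary sentences about $x\mapsto x+k\sin(2\pi x)$ being ``equal to $x+1$'' and the threshold $1/(2\pi)$ are muddled (they ignore $t$). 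Replace them with the concrete construction.

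One conceptual point about the continuity you invoke: the upper semicontinuity you mention is the half you do not use. The step $b(k_*,\alpha)\le\lim_n b(k_n,p_n/q_n)=\alpha$ (and dually for $a$) is lower semicontinuity, i.e.\ persistence of rotation numbers under perturbation. This does not hold for general reasons, and it is not what the paper cites \cite{alunofranks}, \cite{non04} for. Since the statement takes continuity as given, this is not a gap. If you want to justify it, use the hyperbolic periodic saddles with rational rotation numbers in the interior of $\rho_V(\widehat f_{k_*,\alpha})$, quoted from \cite{c1eps} in the paper. They persist for nearby $(k,t)$ and give exactly this half.
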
  

In other words, it is not a simple task to produce non trivial examples of
area-preserving twist maps $f:\mathbb{T}^{2}\rightarrow\mathbb{T}^{2}$ whose 
vertical rotation sets are reduced
to a single irrational number $\alpha .$ According to Theorem A of \cite{eujul}, 
in such a case, $f$ preserves a foliation of the form $\mathcal{F}%
_{G},$ and the dynamics induced on the leaves of $\mathcal{F}_{G}$ by $f$ is
conjugate to the irrational rotation by $\alpha $ on the circle.

The fact that in case of irrational $t,$ $interior(\rho _{V}(\widehat{f}%
_{k,t}))$ is not empty for all $k\neq 0$, implies complicated dynamics.
Among other things, the topological entropy $h_{top}(f_{k,t})>0$ and for all
rationals $r/s\in interior(\rho _{V}(\widehat{f}_{k,t})),$ the map $f_{k,t}$
has periodic hyperbolic saddles of vertical rotation number $r/s$ which have
a full mesh. This means that $\left( \widehat{f}_{k,t}\right) ^{s}(\bullet
)-(0,r)$ has a family of periodic hyperbolic saddle points, given by some
point $\widehat{z}_{r/s}\in \mathbb{T}^{1}\times \mathbb{R}$ and all its
integer vertical translates, such that the unstable manifold $W^{u}(\widehat{%
z}_{r/s})$ has a topologically transverse intersection with $W^{s}(\widehat{z%
}_{r/s})+(0,n)$ for all integers $n.$ In particular, this implies that 
\[
\overline{W^{u}(z_{r/s})}=\overline{W^{s}(z_{r/s})}=Instability\text{ }Region%
\text{ }of\text{ }f_{k,t},
\]
where $z_{r/s}=\tau (\widehat{z}_{r/s})$ is $f_{k,t}$-periodic. The $%
Instability$ $Region$ $of$ $f_{k,t}$ is defined as the complement of 
\[
Inessential(f_{k,t}):=\{z\in \mathbb{T}^{2}:z\text{ belongs to some }f_{k,t}%
\text{-periodic open topological disk}\}.
\]
From the existence of the full mesh, $W^{u}(z_{r/s})$ and $W^{s}(z_{r/s})$
contain compact connected arcs, respectively $\lambda ^{u}$ and $\lambda ^{s}
$ both containing $z_{r/s},$ such that each connected component of $\left(
\lambda ^{u}\cup \lambda ^{s}\right) ^{c}$ is an open topological disk, whose lifts to the plane have 
diameter smaller than some $M>0.$ As $f_{k,t}$ preserves area, its a
classical consequence of Poincar\'e's Recurrence Theorem that a connected
component of $Inessential(f_{k,t})$ can not intersect $\lambda ^{u}\cup
\lambda ^{s},$ because such an intersection would create wandering domains$.$
Therefore the diameters of lifts to the plane of all $f_{k,t}$-periodic open topological disks are
bounded from above by $M.$ All these results appeared in \cite{c1eps}. We
mentioned them here in order to put the main theorem into perspective.

As a final comment, this paper answers a question posed to the author by Alejandro Kocsard. 
We thank him for such an interesting problem. At first, the idea was to understand if for some
irrational $t$ and a non-zero $k$, the rotation set of $\widehat{f}_{k,t}$ could be reduced to $t$.
As our main result shows, it can not. 

\section{Preliminaries and Auxiliary Results}

Consider an area-preserving $f\in \mathrm{Diff}_{\mathrm{tw}}^{1}(\mathbb{T}%
^{2}),$ without loss of generality for some integer $\ell >0$ (so the twist
is to the right), and fix lifts $\widehat{f}\in \mathrm{Diff}_{\mathrm{tw}%
}^{1}(\mathbb{T}^{1}\times \mathbb{R})$ of $f,$ and $\widetilde{f}\in 
\mathrm{Diff}_{\mathrm{tw}}^{1}(\mathbb{R}^{2})$ of $\widehat{f}.$ If $\rho
_{V}(\widehat{f})=\{\alpha \}$ for some irrational $\alpha ,$ then the
following results from \cite{eujul} hold:

\begin{theorem}
Under the above hypotheses, $f$ admits an invariant foliation $\mathcal{F}%
_{G}$.
\end{theorem}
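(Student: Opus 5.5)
The plan is to show that the essential "upper boundaries" of suitable invariant-type regions in the cylinder are Lipschitz graphs. These graphs will be totally ordered, and they must fill the whole cylinder, since any gap would contradict either area preservation or the hypothesis $\rho_V(\widehat f)=\{\alpha\}$. Throughout, $T(\widehat x,\widehat y)=(\widehat x,\widehat y+1)$ denotes the deck translation, which commutes with $\widehat f$.

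\emph{Step 1 (bounded deviations).} The first step is to prove that, since $\rho_V(\widehat f)=\{\alpha\}$ with $\alpha$ irrational and $f$ preserves area, vertical deviations are uniformly bounded: there is $C>0$ with $|p_2\circ\widehat f^{\,n}(\widehat z)-p_2(\widehat z)-n\alpha|\le C$ for all $n\in\mathbb Z$ and all $\widehat z$. I would argue by contradiction. If deviations were unbounded, then, combining the twist condition with Birkhoff-type arguments, one would produce an essential continuum $\Gamma$ and integers $n,m$ with $(\widehat f^{\,n}-(0,m))(\Gamma)$ lying strictly above (or below) $\Gamma$. Area preservation together with a Franks-type fixed point argument for $\widehat f^{\,n}-(0,m)$ would then give either a wandering essential annulus (impossible) or a periodic orbit of rational rotation number $m/n\neq\alpha$ (contradicting the hypothesis).

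\emph{Step 2 (ordered family of invariant graphs).} Let $H_s=\{\widehat y\le s\}$ be the lower half-cylinder. For each $s$, set
\[
U_s:=\bigcup_{n\in\mathbb Z,\ m\in\mathbb Z,\ n\alpha-m\le 0}\bigl(\widehat f^{\,n}(H_s)-(0,m)\bigr).
\]
Step 1 shows that $U_s$ contains $H_s$ and is bounded above by $\{\widehat y\le s+C\}$. The set $U_s$ is open and essential, and its upper frontier $\Gamma_s$ is an essential continuum. Since $\widehat f$ and $T$ commute and the index set $\{n\alpha-m\le0\}$ is a semigroup-like region, the family $\{\Gamma_s\}$ is totally ordered, and $\widehat f$ permutes it according to $\widehat f(\Gamma_s)=\Gamma_{s'}$, with the shift governed by the rotation $R_\alpha$ on the space of leaves. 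Birkhoff's theorem on invariant essential curves for twist maps then shows that each $\Gamma_s$ is the graph of a $K$-Lipschitz function of $\widehat x$. Here $K$ depends only on the twist constant and the $C^1$ norm of $f$, because every forward and backward image of $\Gamma_s$ stays in the same ordered family, so no twist-induced folding is possible.

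\emph{Step 3 (no gaps, i.e.\ foliation).} This is the step I expect to be the main obstacle. The closure of $\bigcup_s\Gamma_s$ is an $\widehat f$- and $T$-invariant lamination by Lipschitz graphs. Suppose there were a gap, that is, an open annulus $A$ between two consecutive leaves. Then $A$ would be mapped to other gaps by $\widehat f^{\,n}-(0,m)$, following the irrational rotation on leaves. Since $\alpha$ is irrational, the images of $A$ are pairwise disjoint in the torus, and they all have the same positive area. This gives infinitely many disjoint sets of equal positive area in a finite-area space, a contradiction. Hence the lamination is a foliation $\mathcal F_G$, and the quotient dynamics on leaves is conjugate to $R_\alpha$.

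The delicate points will be checking in Step 2 that $\Gamma_s$ has no "hairs", which needs the frontier of an essential region to be a curve rather than a more complicated continuum, and making the Lipschitz constant uniform in $s$. Both rely essentially on the bounded deviations established in Step 1.
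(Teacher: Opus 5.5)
The paper does not prove this statement: it quotes it as Theorem A of \cite{eujul}. So your outline has to stand on its own, and its central step is missing.

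\textbf{Step 1 is asserted, not proved.} Steps 2 and 3 presuppose uniformly bounded vertical deviations; without them $U_s$ need not be bounded above and has no upper frontier. For Step 1 you only assert that unbounded deviations, the twist condition and ``Birkhoff-type arguments'' yield an essential continuum $\Gamma$ with $g(\Gamma)$ strictly above or below $\Gamma$, where $g=\widehat f^{\,n}-(0,m)$. No mechanism is given. Moreover, the only Birkhoff-type arguments you have are those of Step 2, and they need the boundedness you are trying to prove.

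The contradiction you then describe is also mis-stated, because the sign of $n\alpha-m$ is what matters.
\begin{itemize}
\item If $n\alpha-m>0$, a continuum pushed strictly upward by $g$ is harmless. Take $\mathrm{Aff}_t$ with $t>0$ irrational, $n=1$, $m=0$, and $\Gamma$ a horizontal circle. Then $g(\Gamma)$ lies strictly above $\Gamma$, and the annulus between them is wandering in the cylinder with no contradiction at all.
\item If $n\alpha-m<0$, the contradiction is immediate and needs neither area nor Franks. Here $g^{-1}$ maps the closed region on and below $\Gamma$ into itself, so $g^{-1}$-orbits stay bounded above. But $\rho_V(\widehat f)=\{\alpha\}$ forces every $g^{-1}$-orbit to climb at rate $m-n\alpha>0$.
\end{itemize}
So the heart of the theorem is producing a continuum displaced in the ``wrong'' direction, and that is exactly what is absent.

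\textbf{The family in Step 2 is not equivariant.} We have $\widehat f(U_s)=\bigcup_{n\alpha-m\le\alpha}\bigl(\widehat f^{\,n}(H_s)-(0,m)\bigr)$. Since $\widehat f(H_s)$ is not a half-cylinder, there is no reason for this to equal any $U_{s'}$, so $\widehat f(\Gamma_s)=\Gamma_{s'}$ is unjustified. The natural fix is to keep one half-cylinder $H$ and move the threshold:
$U^{(r)}=\bigcup_{n\alpha-m\le r}\bigl(\widehat f^{\,n}(H)-(0,m)\bigr)$. This gives $\widehat f(U^{(r)})=U^{(r+\alpha)}$ and $U^{(r)}-(0,1)=U^{(r-1)}$.

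Birkhoff's theorem concerns invariant sets, so its ``no hairs'' part must then be redone for frontiers that $\widehat f^{\pm1}$ carry to other members of the family. The cone estimate giving a uniform Lipschitz constant does transfer, once each member is known to be a graph.

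\textbf{Step 3 does not rule out touching leaves.} You pass from an ordered family of graphs $\psi_r\le\psi_{r'}$ to a lamination without showing that distinct members are disjoint. Area-preservation rules out gaps, but it cannot rule out members touching. On each vertical line, the points lying on more than one member carry nondegenerate parameter intervals with pairwise disjoint interiors. Hence there are only countably many such points per vertical line, and the touching set has measure zero. You need a separate argument that the graphs never meet before you can speak of a foliation $\mathcal F_G$.
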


As a consequence, all iterates of $f$ satisfy a twist condition:

\begin{corollary}
Under the hypotheses above, for all $n>0$ the iterates $f^{n}$ satisfy the
topological twist condition.
\end{corollary}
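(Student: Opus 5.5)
The plan is to deduce the corollary directly from the preceding theorem, which gives an $f$-invariant foliation $\mathcal{F}_{G}$ by $K$-Lipschitz graphs over the $x$-coordinate. Lift the foliation to the plane. Each leaf becomes the graph $\widetilde{y}=\widetilde{\gamma}(\widetilde{x})$ of a $K$-Lipschitz, $1$-periodic function, and the lifted leaves are linearly ordered by height. Since $f$ is homotopic to a Dehn twist, it maps horizontal essential curves to horizontal essential curves. Hence $\widetilde{f}$ permutes the lifted leaves and preserves their vertical order: the preserved orientation and the invariance under the deck translation $(0,1)$ force this. The same holds for every iterate $\widetilde{f}^{n}$. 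By the topological twist condition for $\widetilde{f}^{n}$ I mean the following: for each $\widetilde{x}$, the image of the vertical line $\{\widetilde{x}\}\times\mathbb{R}$ under $\widetilde{f}^{n}$ is a curve along which $p_{1}$ is strictly increasing in $\widetilde{y}$, tending to $\pm\infty$ as $\widetilde{y}\to\pm\infty$. Equivalently, $\widetilde{f}^{n}(\widetilde{x},\cdot)$ crosses each lifted leaf exactly once, moving monotonically to the right.

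The key steps are as follows. First, the vertical segment $\sigma$ between two consecutive lifted leaves $L_{1}<L_{2}$, say $L_{2}=L_{1}+(0,1)$, is mapped by $\widetilde{f}^{n}$ to an arc joining $\widetilde{f}^{n}(L_{1})$ to $\widetilde{f}^{n}(L_{2})$. Second, I prove by induction on $n$ that the image of any vertical line is a "positively tilted" curve, that is, a graph over $\widetilde{y}$ that moves monotonically to the right across the lamination. The base case is the twist hypothesis (\ref{equaI02}) for $f$ itself. For the inductive step I use two facts. The leaves are $K$-Lipschitz graphs, so a curve transverse to them in the "right" direction stays in a cone. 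Moreover, $\widetilde{f}$ maps a region between two leaves to a region between two leaves. The natural tool is this: a curve $c$ meeting each leaf exactly once, from below to above, can be pushed by $\widetilde{f}$. The image $\widetilde{f}(c)$ still meets each leaf exactly once, because leaves map to leaves bijectively. Then the twist of $\widetilde{f}$ together with the Lipschitz bound shows that $p_{1}$ increases along $\widetilde{f}(c)$.

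The main obstacle is this last monotonicity claim: meeting each leaf once does not by itself give monotone $p_{1}$. I would instead run the argument in the form used in \cite{eujul}. The leafwise dynamics is conjugate to the irrational rotation by $\alpha$, and the vertical deviations are uniformly bounded. From these I would get that $p_{1}\circ\widetilde{f}^{n}(\widetilde{x},\widetilde{y})\to\pm\infty$ monotonically as $\widetilde{y}\to\pm\infty$. I would then use the ordering of the images of distinct vertical lines, which is preserved because $\widetilde{f}^{n}$ is a homeomorphism commuting with the deck translations, to conclude the topological twist condition. I expect that making the inductive cone/ordering argument rigorous, rather than merely the leaf-crossing count, is where the real work lies. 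If it fails, I would fall back on quoting the corresponding statement of Theorem A of \cite{eujul} directly.
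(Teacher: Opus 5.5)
\textbf{There is a genuine gap in the deduction you sketch.} The paper itself gives no argument for this corollary: it is quoted, together with the foliation theorem, among the results of \cite{eujul}. So your fallback of citing \cite{eujul} is exactly the paper's treatment.

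The gap sits at the only nontrivial step. You assert that the twist of $\widetilde f$ plus the Lipschitz bound make $p_1$ increase along $\widetilde f(c)$, then concede you cannot justify it. Your proposed repair does not close it:
\begin{itemize}
\item Bounded vertical deviations, and the conjugacy of the leaf dynamics to a rotation, only control \emph{which leaf} a point lies on. They say nothing about the horizontal coordinate along the image of a vertical line.
\item The left-to-right ordering of the disjoint curves $\widetilde f^{\,n}(\{\widetilde x\}\times\mathbb R)$ holds for every such lift. It cannot stop a single image curve from folding back across a vertical line, which is exactly how the twist condition fails for $f_{k,t}$, $k\neq 0$, later in the paper.
\item Relatedly, crossing each lifted leaf exactly once is automatic for any iterate, so it is not equivalent to the twist condition.
\item The image of a vertical line should become a graph over $\widetilde x$, not over $\widetilde y$.
\end{itemize}

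\textbf{The missing idea is to push points leaf by leaf instead of tracking the image curve.}
\begin{itemize}
\item For a lifted leaf $L=\mathrm{graph}(\gamma_L)$, set $h_L(u):=p_1\circ\widetilde f(u,\gamma_L(u))$. This is the $\widetilde x$-coordinate expression of the homeomorphism $\widetilde f|_L:L\to\widetilde f(L)$ between two graphs, and $h_L(u+1)=h_L(u)+1$. Hence $h_L$ is an increasing homeomorphism of $\mathbb R$.
\item For fixed $u$, $h_L(u)$ is strictly increasing in $L$ (leaves ordered by height), by the twist of $\widetilde f$ on the vertical through $u$.
\item $\widetilde f$ preserves the order of leaves. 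Indeed, $\widetilde f(\widetilde z+(0,m))=\widetilde f(\widetilde z)+(m\ell,m)$, so $\widetilde f$ maps the region above $L$ onto the region above $\widetilde f(L)$.
\end{itemize}
Now take $\widetilde z=(\widetilde x,\widetilde y)\in L$ and $\widetilde z'=(\widetilde x,\widetilde y')\in L'$ with $\widetilde y<\widetilde y'$. Then $L$ lies below $L'$, so $\widetilde f^{\,n}(L)$ lies below $\widetilde f^{\,n}(L')$ for all $n$. Starting from $p_1\widetilde z=p_1\widetilde z'$, each step gives
\[
p_1\widetilde f^{\,n+1}(\widetilde z)=h_{\widetilde f^{n}(L)}\bigl(p_1\widetilde f^{\,n}(\widetilde z)\bigr)\le h_{\widetilde f^{n}(L)}\bigl(p_1\widetilde f^{\,n}(\widetilde z')\bigr)<h_{\widetilde f^{n}(L')}\bigl(p_1\widetilde f^{\,n}(\widetilde z')\bigr)=p_1\widetilde f^{\,n+1}(\widetilde z').
\]
By induction, $p_1\circ\widetilde f^{\,n}(\widetilde x,\cdot)$ is strictly increasing for every $n\geq 1$. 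The relation $\widetilde f^{\,n}(\widetilde z+(0,1))=\widetilde f^{\,n}(\widetilde z)+(n\ell,1)$ then gives the limits $\pm\infty$.

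This uses only that the invariant leaves are graphs over $\widetilde x$. No cone argument, Lipschitz constant, irrationality of $\alpha$ or deviation bound is needed.
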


\begin{remark}
In this $C^{1}$ case, satisfying the topological twist condition means that 
\[
\frac{\partial }{\partial \widetilde{y}}p_{1}\circ \widetilde{f}^{n}(%
\widetilde{x},\widetilde{y})\geq 0\text{ for all }(\widetilde{x},\widetilde{y%
})\in \mathbb{R}^{2}\text{ and all }n>0.
\]
\end{remark}

So we get the following corollary, which will be at the heart of the proof
of Theorem \ref{main}.

\begin{corollary}
\label{corolimp} Let $\widehat{f}\in \mathrm{Diff}_{\mathrm{tw}}^{1}(%
\mathbb{T}^{1}\times \mathbb{R})$ be an area-preserving twist map, whose
twist is to the right $(\ell >0),$ and fix some $\widetilde{f}:\mathbb{R}%
^{2}\rightarrow \mathbb{R}^{2},$ a lift of $\widehat{f}.$ Suppose that for
some $n>0$ and some $(\widetilde{x},\widetilde{y})\in \mathbb{R}^{2},$ 
\[
\frac{\partial }{\partial \widetilde{y}}p_{1}\circ \widetilde{f}^{n}(%
\widetilde{x},\widetilde{y})<0.
\]
In this case, if $\alpha =\int_{\mathbb{T}^{2}}\left[ p_{2}\circ \widehat{f}%
(\bullet )-p_{2}(\bullet )\right] d$Leb is irrational, then $\alpha \in
interior(\rho _{V}(\widehat{f})).$
\end{corollary}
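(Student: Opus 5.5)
We argue by contradiction, splitting the conclusion into two parts: first that $\rho_V(\widehat f)$ is non-degenerate, then that $\alpha$ is not an endpoint of it.

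\emph{Non-degeneracy.} Lebesgue measure is $f$-invariant, and by Birkhoff's theorem applied to the displacement function $\phi:=p_2\circ\widehat f(\bullet)-p_2(\bullet)$, which is well defined on $\mathbb{T}^2$, Lebesgue-almost every point has vertical rotation number equal to $\int\phi\,d\mathrm{Leb}=\alpha$. Hence $\alpha\in\rho_V(\widehat f)$. If $\rho_V(\widehat f)=\{\alpha\}$ with $\alpha$ irrational, the earlier Theorem gives an invariant foliation $\mathcal{F}_G$. The Corollary and Remark that follow it then give $\frac{\partial}{\partial\widetilde y}p_1\circ\widetilde f^{n}\geq0$ everywhere, for every $n>0$. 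This contradicts the hypothesis. So $\rho_V(\widehat f)=[a,b]$ with $a<b$, and $\alpha\in[a,b]$.

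\emph{$\alpha$ is not an endpoint.} Suppose, say, $\alpha=b$; the case $\alpha=a$ is symmetric. Then Lebesgue measure is an invariant measure whose rotation number lies at the endpoint of the rotation interval. I would invoke the bounded-deviation result for such measures (announced in the introduction and proved in the literature, e.g.\ \cite{eujul}, \cite{c1eps}): for every $\mu$-typical $\widehat z$ and all $n$,
\[
p_2\circ\widehat f^{n}(\widehat z)-p_2(\widehat z)-n\alpha\leq M
\]
for some uniform constant $M$. Since Lebesgue measure has full support, this bound holds on a dense set, and by continuity it extends to all points.

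Next I would use area preservation together with the fact that the integral of $\phi$ equals $\alpha$. The aim is to upgrade the one-sided bound to a two-sided bound on deviations from $\alpha$, everywhere. A concrete route is a Gottschalk--Hedlund style argument: $\phi-\alpha$ has zero Lebesgue mean and its Birkhoff sums are bounded above, so I would try to show they are also bounded below. From bounded deviations one then gets $\rho_V(\widehat f)=\{\alpha\}$, contradicting non-degeneracy. Alternatively, I would apply the argument of Theorem A of \cite{eujul} directly to the endpoint situation. There, the sets $\{\widehat z: \sup_n (p_2\circ\widehat f^n(\widehat z)-p_2(\widehat z)-n\alpha)\leq c\}$ are $\widehat f$-invariant, essential, and separate the cylinder. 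Their boundaries produce invariant essential continua, which in the area-preserving twist setting are Lipschitz graphs (Birkhoff's theorem). The density of the Lebesgue-typical orbits then gives a foliation by such graphs, and hence again the twist condition for all iterates, contradicting the hypothesis.

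\emph{Main obstacle.} The main difficulty is this endpoint step: showing that a full-support invariant measure sitting at an endpoint of the rotation interval forces rigidity, meaning invariant Lipschitz graphs through every point. This is where the irrationality of $\alpha$ is essential, since it rules out periodic essential curves that could bound a proper sub-annulus. I would first try to quote the endpoint deviation bound as a black box and reduce to the Theorem of \cite{eujul} as above.
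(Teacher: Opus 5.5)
Your non-degeneracy step is the paper's argument. One small correction there: Birkhoff's theorem gives Lebesgue-a.e.\ rotation number $\alpha$ only if Lebesgue is ergodic. What you actually need, and what the paper uses, is that the rotation number of every invariant measure lies in the interval $\rho_V(\widehat f)$.

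\textbf{The gap is in the endpoint step.} Write $S_n(z):=p_2\circ\widehat f^{n}(\widehat z)-p_2(\widehat z)-n\alpha$, which depends only on $z=\tau(\widehat z)\in\mathbb{T}^2$. When $\alpha=b=\max\rho_V(\widehat f)$, the bound $S_n\le M$ you invoke is the harmless direction. It only says that no orbit outruns the maximal speed, which is fully compatible with orbits of rotation number $a<b$, so by itself it contradicts nothing. The whole content is a lower bound on $S_n$ on a dense set, and you leave that as something you ``would try to show''. Your fallback route does not supply it. The sets $\{\sup_n S_n\le c\}$ are not invariant, since $S_n\circ f=S_{n+1}-S_1$. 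Nothing justifies obtaining Lipschitz graphs through every point from Lebesgue-typical orbits. Irrationality also plays no role at this stage; it is used only to rule out $\rho_V(\widehat f)=\{\alpha\}$.

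\textbf{How the paper closes it.} It uses Lemma~\ref{fabio}, which is two-sided: if $\rho_V(\mu)$ is an endpoint of a non-degenerate $\rho_V(\widehat f)$, then $|S_n|\le M_f$ at every point of $\operatorname{supp}\mu$ and for all $n>0$. Take $\mu=\mathrm{Leb}$, whose support is all of $\mathbb{T}^2$. Then every orbit stays within $M_f$ of $n\alpha$, so $\rho_V(\widehat f)=\{\alpha\}$, contradicting non-degeneracy.

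\textbf{Your own route can also be completed.} The Gottschalk--Hedlund idea works in a few lines from the one-sided bound $S_n\le M$, which you already have on all of $\mathbb{T}^2$ after your density argument.
\begin{itemize}
\item Put $g:=\sup_{n\ge 0}S_n$ with $S_0=0$. Then $0\le g\le M$, and since $\sup_{n\ge1}S_n=(\phi-\alpha)+g\circ f$, you get $g\ge(\phi-\alpha)+g\circ f$.
\item Integrate against the invariant measure Lebesgue and use $\int(\phi-\alpha)\,d\mathrm{Leb}=0$. The nonnegative function $g-g\circ f-(\phi-\alpha)$ has zero integral, so $\phi-\alpha=g-g\circ f$ almost everywhere.
\item Hence $S_n=g-g\circ f^{n}\in[-M,M]$ almost everywhere, and by continuity of $S_n$ everywhere. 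This gives the same contradiction.
\end{itemize}

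Either way, the missing ingredient is a lower deviation bound on a set of full support, not a new construction of invariant graphs.
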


\begin{proof} 
From the previous results, $\rho _{V}(\widehat{f})$ can not be equal
to $\{\alpha \}.$ So, either $\rho _{V}(\widehat{f})$ is a single rational
point, or it has interior. The condition $\alpha =\int_{\mathbb{T}%
^{2}}\left[ p_{2}\circ \widehat{f}(\bullet )-p_{2}(\bullet )\right] d$Leb is irrational,
means that the vertical rotation number of Lebesgue measure is equal to $%
\alpha .$ As the rotation number of any Borel probability $f$-invariant
measure belongs to $\rho _{V}(\widehat{f}),$ we get that $\rho _{V}(\widehat{%
f})$ must have interior.

So, we are left to show that the vertical rotation number of the Lebesgue
measure does not belong to the boundary of $\rho _{V}(\widehat{f}).$ This
follows from the following result, adapted from Lemma 11 of \cite{bounded}
to our setting.

\begin{lemma}
\label{fabio} Suppose $\widehat{f}\in \mathrm{Diff}_{\mathrm{tw}}^{1}(%
\mathbb{T}^{1}\times \mathbb{R})$ has a vertical rotation set $\rho _{V}(%
\widehat{f})$ with interior. Denoting by $f$ the torus map lifted by $%
\widehat{f},$ let $\mu $ be a Borel probability $f$-invariant measure such
that 
$$
\rho _{V}(\mu )=\int_{\mathbb{T}^{2}}\left[ p_{2}\circ \widehat{f}%
(\bullet )-p_{2}(\bullet )\right] d\mu 
$$
 is a boundary point of $\rho _{V}(%
\widehat{f}).$ Then, if $x^{\prime }\in supp(\mu ),$ for any $\widehat{x}%
^{\prime }\in \tau ^{-1}(x^{\prime })$ and any integer $n>0,$

\begin{eqnarray}
\left| p_{2}\circ \widehat{f}^{n}(\widehat{x}^{\prime })-p_{2}(\widehat{x}%
^{\prime })-n.\rho _{V}(\mu )\right| \leq M_{f},  \label{ffff}
\end{eqnarray}
for some constant $M_{f}>0$ which depends only on $f$ and can be explicitly
computed.
\end{lemma}

\begin{remark}
In our setting, the constant $M_{f}=2+C,$ where $C>0$ comes from
expression (33) of \cite{eujul}.
\end{remark}

From the previous lemma, if $\rho _{V}(\widehat{f})$ has interior, then $%
\rho _{V}(Lebesgue)$ can not belong to the boundary of $\rho _{V}(\widehat{f}%
),$ because there are points in the torus whose vertical rotation numbers
are different from $\rho _{V}(Lebesgue),$ and at these points inequality (%
\ref{ffff}) would not hold for $\mu =Lebesgue.$ So, the corollary is proved.
\end{proof}

We conclude this subsection with a lemma on the twist condition:

\begin{lemma}
\label{twistcondn} Let $\widetilde{f}\in \mathrm{Diff}_{\mathrm{tw}}^{1}(%
\mathbb{R}^{2})$ be such that $\widetilde{f}^{n}$ does not satisfy the topological twist 
condition for some $n>0.$ Then, for all $m\geq n,$ $\widetilde{f}^{m}$ does not satisfy it either.
\end{lemma}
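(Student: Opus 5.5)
The plan is to work infinitesimally, along a single orbit, with the direction of the image of the vertical vector. Fix the point $\widetilde{z}=(\widetilde{x},\widetilde{y})$ at which $\frac{\partial}{\partial \widetilde{y}}p_{1}\circ \widetilde{f}^{n}(\widetilde{z})<0$, and set
\[
v_{j}:=D\widetilde{f}^{j}(\widetilde{z})\,(0,1)=D\widetilde{f}(\widetilde{f}^{j-1}(\widetilde{z}))\,v_{j-1},\qquad v_{0}=(0,1).
\]
The first coordinate of $v_{j}$ is exactly $\frac{\partial}{\partial \widetilde{y}}p_{1}\circ \widetilde{f}^{j}(\widetilde{z})$. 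So the lemma at this point amounts to showing that once $p_{1}(v_{n})<0$, the vector $v_{m}$ cannot return to the closed right half-plane for $m\geq n$. Since this may fail at the same point $\widetilde{z}$, the plan also allows moving the point, which is all the lemma requires.

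I would encode this with a continuous angle $\theta_{j}$ of $v_{j}$, with $\theta_{0}=\pi/2$ and the lift chosen continuously along the sequence. Each $D\widetilde{f}(w)$ is an orientation-preserving linear map (since $\widetilde{f}$ is a diffeomorphism isotopic to a Dehn twist, $\det>0$). It therefore induces an increasing lift $F_{w}$ of a circle homeomorphism on angles, commuting with $\theta\mapsto\theta+\pi$. The twist condition (\ref{equaI02}) with $\ell>0$ says that the vertical direction is moved strictly clockwise, by less than a half turn:
\[
F_{w}(\pi/2+k\pi)\in(\pi/2+k\pi-\pi,\;\pi/2+k\pi)\quad\text{for all } k\in\mathbb{Z}.
\]
Using this monotonicity and $\pi$-equivariance, I would prove by induction that $\theta_{j+1}<\theta_{j}$ for all $j$. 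Indeed, $\theta_{1}<\theta_{0}$, and then $\theta_{j+1}=F(\theta_{j})<F(\theta_{j-1})=\theta_{j}$ is not quite valid because the maps $F$ differ at each step. So the induction will instead compare each $\theta_{j}$ with the vertical grid $\pi/2+\pi\mathbb{Z}$: if $\theta_{j}\in(\pi/2+(k-1)\pi,\pi/2+k\pi]$, then by monotonicity $\theta_{j+1}\leq F(\pi/2+k\pi)<\pi/2+k\pi$. Hence the sequence never crosses a vertical direction upward. In particular, the index $N_{j}$ of the last vertical line passed, that is, the number of downward crossings of $\pi/2+\pi\mathbb{Z}$, is non-decreasing in $j$.

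The failure $p_{1}(v_{n})<0$ means $\theta_{n}<-\pi/2$, i.e.\ $N_{n}\geq1$, and $N_{m}\geq1$ then persists for all $m\geq n$. The main obstacle is that $p_{1}(v_{m})\geq 0$ is compatible with $N_{m}\geq 2$, namely $\theta_{m}\le -3\pi/2$, so one must rule out, or exploit, a second crossing. My plan is to exploit it: if the tangent vector has crossed vertical twice, then by continuity in the base point near $\widetilde{z}$ (or directly at an intermediate time) one should be able to locate a point and an iterate for which $\theta$ lies strictly inside $(-3\pi/2,-\pi/2)$ at time $m$. To do this I would use the family of vertical vectors along the whole vertical line through $\widetilde{z}$, where far-away points behave like the unperturbed twist with $\theta$ barely below $\pi/2$. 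The vertical line is connected and the angle function $\theta_{m}(\cdot)$ is continuous on it, so if some point has $\theta_{m}\le-\pi/2$ while others have $\theta_{m}$ near $\pi/2$, the intermediate value theorem yields a point with $\theta_{m}\in(-3\pi/2,-\pi/2)$, giving $\frac{\partial}{\partial\widetilde{y}}p_{1}\circ\widetilde{f}^{m}<0$ there. The step I expect to be delicate is justifying that on the vertical line there are points with $\theta_{m}>-\pi/2$ for every $m$. I would attempt this by taking points whose orbits remain where $\widetilde{f}^{m}$ is close to a twist, or else by replacing this with the purely topological formulation of the twist condition for $\widetilde{f}^{m}$ on the image of a vertical line.
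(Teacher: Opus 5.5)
Your angle bookkeeping is correct as far as it goes. With the normalized lifts $F_w$ (so $F_w(\pi/2)\in(-\pi/2,\pi/2)$ and $F_w(\theta+\pi)=F_w(\theta)+\pi$), $\theta_j$ never crosses a vertical direction upward. Strict monotonicity also gives $\theta_m<-\pi/2$ for all $m\ge n$ once $\theta_n<-\pi/2$.

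The whole content of the lemma, however, lies in the step you leave open. Suppose $\widetilde f^{m}$ satisfied the twist condition. Connectedness of $\mathbb{R}^2$ would then force $\theta_m(\mathbb{R}^2)\subset[-\pi/2,\pi/2]+2\pi j$ for a single $j$, and $\theta_m(\widetilde z)<-\pi/2$ forces $j\le -1$. So every vertical vector would have made an extra full clockwise turn, and you must exclude this.

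The route you sketch for excluding it rests on a false premise. Since $\widetilde f$ lifts a torus diffeomorphism, $D\widetilde f$, and hence $\theta_m$, is $\mathbb{Z}^2$-periodic. There are no far-away points that behave like the unperturbed twist: on each vertical line $\theta_m$ is $1$-periodic and takes the same values as on a fundamental domain. So your intermediate value argument has nothing to work with unless you independently produce a point with $\theta_m>-3\pi/2$, which is precisely the claim in question.

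Moreover, no argument built only from the infinitesimal properties you use can supply this. Those properties are orientation preservation and the vertical direction being moved clockwise by less than a half turn at each step. The constant sequence of matrices given by clockwise rotation by $0.9\pi$ has both properties. Yet its normalized angles are $\pi/2,-0.4\pi,-1.3\pi,-2.2\pi$, so the twist condition fails at time $2$ and holds again at time $3$. A global input coming from the isotopy class of the Dehn twist is therefore indispensable.

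The paper gets this input from Le Calvez's description of $\widetilde f^{n}(V_{\widetilde x_0})\cap V_{\widetilde x_1}$. There, the first intersection point in the parameter $t$ is the highest and the last one is the lowest. Hence the arc between them contains a point $(\widetilde x_0,t^*)$ with $\frac{\partial}{\partial\widetilde y}\widetilde f^{n}(\widetilde x_0,t^*)=(0,-a)$, $a>0$. One more application of the twist condition then gives $\frac{\partial}{\partial\widetilde y}p_1\circ\widetilde f^{n+1}(\widetilde x_0,t^*)<0$. The induction then restarts from a \emph{new} point with exactly downward tangent, which is what prevents extra winding from accumulating. Your proposal contains no substitute for this topological step.
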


\begin{proof} 
Without loss of generality, assume $\widetilde{f}$ is a right twist map. As $\widetilde{f}^{n}$ does not satisfy the topological twist condition, for some 
$\widetilde{x}_{0},
\widetilde{x}_{1}\in \mathbb{R},$ the intersection $%
\widetilde{f}^{n}(V_{\widetilde{x}_{0}})\cap V_{\widetilde{x}_{1}}$ contains
more than one point (denote $V_{\widetilde{x}}:=\{\widetilde{x}\}\times %
\mathbb{R}).$ Then as the curve $\mathbb{R}\ni t\rightarrow \widetilde{f}%
^{n}(\widetilde{x}_{0},t)$ is a negative curve, there exists a closed
interval $[t_{F},t_{L}]\subset \mathbb{R}$ such that for $t<t_{F},$ $%
\widetilde{f}^{n}(\widetilde{x}_{0},t)$ is to the left of $V_{\widetilde{x}%
_{1}},$ and for $t>t_{L},$ $\widetilde{f}^{n}(\widetilde{x}_{0},t)$ is to
the right of $V_{\widetilde{x}_{1}},$ and moreover, $\widetilde{f}^{n}(%
\widetilde{x}_{0},t_{F})$ is the highest point in $\widetilde{f}^{n}(V_{%
\widetilde{x}_{0}})\cap V_{\widetilde{x}_{1}}$ and $\widetilde{f}^{n}(%
\widetilde{x}_{0},t_{L})$ is the lowest (see Le Calvez \cite
{le1991proprietes}). This means that $[t_{F},t_{L}]\ni t\rightarrow $ $%
\widetilde{f}^{n}(\widetilde{x}_{0},t)$ is a simple arc connecting 2 points
in $V_{\widetilde{x}_{1}},$ going from the highest, to the lowest$.$ So,
there exists $t^{*}\in [t_{F},t_{L}]$ such that 
\[
\frac{\partial }{\partial \widetilde{y}}\widetilde{f}^{n}(\widetilde{x}%
_{0},t^{*})=(0,-a),\text{ for some constant }a>0.
\]
But then, 
\[
D\widetilde{f}\mid _{\widetilde{f}^{n}(\widetilde{x}_{0},t^{*})}.\left( 
\begin{array}{l}
0 \\ 
-a
\end{array}
\right) =\left( 
\begin{array}{l}
\frac{\partial }{\partial \widetilde{y}}\widetilde{f}\left( \widetilde{f}%
^{n}(\widetilde{x}_{0},t^{*})\right) .(-a) \\ 
\ast **
\end{array}
\right) ,
\]
and the first coordinate $\frac{\partial }{\partial \widetilde{y}}\widetilde{%
f}\left( \widetilde{f}^{n}(\widetilde{x}_{0},t^{*})\right) .(-a)<0$ because $%
\widetilde{f}$ is a right twist map. So, $\widetilde{f}^{n+1}$ also does not
satisfy the twist condition. The proof follows by induction.
\end{proof}

\section{Proofs}

First, assume $t=p/q.$ Then, for $k=0,$ 
\[
\left( \widetilde{f}_{0,t}\right) ^{q}(\widetilde{x},%
\widetilde{y})-(0,p)=(\widetilde{x}+q\widetilde{y}+\frac{p(q-1)}{2},\;%
\widetilde{y}).
\]
So, $\left( \widetilde{f}_{0,t}\right) ^{q}(\bullet)-(0,p)$ is a lift to the 
plane of the area-preserving exact integrable
twist map of the vertical cylinder, $\left( \widehat{f}_{0,t}\right) ^{q}(%
\bullet)-(0,p).$ Thus, KAM theorem (see 
\cite{kam} page 680), implies that for a constant $k_{p/q}>0,$ if $\left| k\right|
<k_{p/q},$ then $\left( \widehat{f}_{k,t}\right) ^{q}(\bullet)-(0,p)$ 
has many KAM invariant curves, and so orbits of points in 
$\mathbb{T}^{1}\times \mathbb{R}$ under $\left( \widehat{f}_{k,t}\right) ^{q}(\bullet)-(0,p)$ 
are uniformly bounded. Therefore, 
\[
\rho _{V}(\left( \widehat{f}_{k,t}\right) ^{q}(\bullet)-(0,p))=\{0\}\text{ and so }\rho _{V}(\widehat{f}_{k,t})=\{p/q\}.
\]

Now assume $t$ is irrational and $k\neq 0$ is any real number. From
Corollary \ref{corolimp}, it is enough to show that for some $n>0$ and $(%
\widetilde{x},\widetilde{y})$ in the plane, $\frac{\partial }{\partial 
\widetilde{y}}p_{1}\circ \widetilde{f}_{k,t}^{n}(\widetilde{x},\widetilde{y}%
)<0.$ To see this, note that 
\[
\widetilde{f}_{k,t}(\widetilde{x},\widetilde{y})=(\widetilde{x}+\widetilde{y}%
+k\sin (2\pi \widetilde{x}),\;\widetilde{y}+k\sin (2\pi \widetilde{x})+t)
\]
satisfies: 
\[
\int_{\mathbb{T}^{2}}\left[ p_{2}\circ \widetilde{f}(\bullet )-p_{2}(\bullet
)\right] d\text{Leb}=\int_{\mathbb{T}^{2}}\left[ k\sin (2\pi \widetilde{x}%
)+t\right] d\text{Leb}=t.
\]

Writing, $\widetilde{f}_{k,t}^{n}(\widetilde{x},\widetilde{y})=(p_{1}\circ 
\widetilde{f}_{k,t}^{n}(\widetilde{x},\widetilde{y}),p_{2}\circ \widetilde{f}%
_{k,t}^{n}(\widetilde{x},\widetilde{y})),$ we get that 
\[
\frac{\partial }{\partial \widetilde{y}}p_{1}\circ \widetilde{f}_{k,t}^{n}(%
\widetilde{x},\widetilde{y})=\frac{\partial }{\partial \widetilde{y}}\left(
p_{1}\circ \widetilde{f}_{k,t}^{n-1}(\widetilde{x},\widetilde{y})+p_{2}\circ 
\widetilde{f}_{k,t}^{n-1}(\widetilde{x},\widetilde{y})+k\sin \left( 2\pi
.p_{1}\circ \widetilde{f}_{k,t}^{n-1}(\widetilde{x},\widetilde{y})\right)
\right) =
\]

\[
=\frac{\partial }{\partial \widetilde{y}}p_{1}\circ \widetilde{f}%
_{k,t}^{n-1}(\widetilde{x},\widetilde{y})\left[ 1+2k\pi \cos \left( 2\pi
.p_{1}\circ \widetilde{f}_{k,t}^{n-1}(\widetilde{x},\widetilde{y})\right)
\right] +\frac{\partial }{\partial \widetilde{y}}p_{2}\circ \widetilde{f}%
_{k,t}^{n-1}(\widetilde{x},\widetilde{y}). 
\]

The expression of $\widetilde{f}_{k,t}$ implies that $p_{1}\circ \widetilde{f%
}_{k,t}^{n-1}(\widetilde{x},\widetilde{y})=p_{1}\circ \widetilde{f}%
_{k,t}^{n-2}(\widetilde{x},\widetilde{y})+p_{2}\circ \widetilde{f}%
_{k,t}^{n-1}(\widetilde{x},\widetilde{y})-t.$ So, 
\[
\frac{\partial }{\partial \widetilde{y}}p_{2}\circ \widetilde{f}_{k,t}^{n-1}(%
\widetilde{x},\widetilde{y})=\frac{\partial }{\partial \widetilde{y}}%
p_{1}\circ \widetilde{f}_{k,t}^{n-1}(\widetilde{x},\widetilde{y})-\frac{%
\partial }{\partial \widetilde{y}}p_{1}\circ \widetilde{f}_{k,t}^{n-2}(%
\widetilde{x},\widetilde{y}),
\]
which gives the recurrence we need: 
\[
\frac{\partial }{\partial \widetilde{y}}p_{1}\circ \widetilde{f}_{k,t}^{n}(%
\widetilde{x},\widetilde{y})=\frac{\partial }{\partial \widetilde{y}}%
p_{1}\circ \widetilde{f}_{k,t}^{n-1}(\widetilde{x},\widetilde{y})\left[
2+2k\pi \cos \left( 2\pi .p_{1}\circ \widetilde{f}_{k,t}^{n-1}(\widetilde{x},%
\widetilde{y})\right) \right] -\frac{\partial }{\partial \widetilde{y}}%
p_{1}\circ \widetilde{f}_{k,t}^{n-2}(\widetilde{x},\widetilde{y}).
\]

Denoting $(x_{0},y_{0})=(x,y)=p(\widetilde{x},\widetilde{y})$ and $%
f_{k,t}^{n}(x,y):=(x_{n},y_{n}),$ we get that 
\begin{equation}
y_{n}=y_{n-1}+k\sin (2\pi x_{n-1})+t\text{ mod 1, and }x_{n}=x_{n-1}+y_{n}-t%
\text{ mod 1}  \label{orbit}
\end{equation}

satisfy the following: If we define $a_{0}=0,$ $a_{1}=1$, and for $n\geq 1,$ 
\begin{equation}
a_{n+1}=\left[ 2+2\pi k\cos (2\pi x_{n})\right] a_{n}-a_{n-1},
\label{arecurrence}
\end{equation}
then, 
\[
\frac{\partial }{\partial \widetilde{y}}p_{1}\circ \widetilde{f}_{k,t}^{n}(%
\widetilde{x},\widetilde{y})=a_{n}.
\]

Thus, in order to show that some iterate of $\widetilde{f}_{k,t}$ does not
satisfy a twist condition, we will find $(x_{0},y_{0})\in \mathbb T^{2}$
such that $a_{n}<0$ for some large enough $n>0.$

\begin{lemma}
For every $k\neq0$ and $t\in \mathbb R$, there exist $n>0$ and $(x_{0},y_{0})\in %
\mathbb T^{2}$ such that $a_{n}<0.$
\end{lemma}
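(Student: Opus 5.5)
The plan is to turn the recurrence (\ref{arecurrence}) into a statement about positive definiteness of a Jacobi matrix, and then to violate positivity with a variational test vector built from an ergodic component of Lebesgue measure. Write $c_i:=2+2\pi k\cos(2\pi x_i)$. For $N\ge 2$, let $J_{N}$ be the $(N-1)\times(N-1)$ symmetric tridiagonal matrix with diagonal entries $c_1,\dots,c_{N-1}$ and off-diagonal entries $-1$. Expanding the determinant along the last row gives exactly the recurrence (\ref{arecurrence}) with $a_0=0$, $a_1=1$. Hence $a_m=\det J_m$ is the $(m-1)$-st leading principal minor of $J_N$ for $2\le m\le N$.

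Suppose that $a_m>0$ for all $1\le m\le N$. Then Sylvester's criterion makes $J_N$ positive definite. So it suffices to find an orbit and a vector $v=(v_1,\dots,v_{N-1})$ with
\[
Q(v):=\sum_{i=1}^{N-1}c_iv_i^2-2\sum_{i=1}^{N-2}v_iv_{i+1}
=\sum_{i=1}^{N-2}(v_{i+1}-v_i)^2+v_1^2+v_{N-1}^2+2\pi k\sum_{i=1}^{N-1}\cos(2\pi x_i)v_i^2<0 .
\]
Given this, let $m$ be the first index with $a_m\le 0$; note $m\ge 2$ since $a_1=1$. If $a_m<0$ we are done. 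If $a_m=0$, then $a_{m+1}=c_m\cdot 0-a_{m-1}=-a_{m-1}<0$, by minimality of $m$. Either way some $a_n<0$.

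To produce $v$, I take $v_i=\varphi(x_i)$ for a function $\varphi$ of the horizontal coordinate, chosen so that the Lebesgue average of
\[
F(z):=\bigl(\varphi(p_1 f_{k,t}(z))-\varphi(p_1 z)\bigr)^2+2\pi k\cos(2\pi p_1 z)\,\varphi(p_1 z)^2
\]
is negative. Concretely, set $\varphi(x)=1-\varepsilon\,\mathrm{sgn}(k)\cos(2\pi x)$. Since $\int\cos(2\pi x)\,dx=0$, we get
\[
\int 2\pi k\cos(2\pi x)\varphi^2\,d\mathrm{Leb}=-2\varepsilon\cdot 2\pi|k|\cdot\tfrac12+O(\varepsilon^2)=-2\pi|k|\varepsilon+O(\varepsilon^2).
\]
The difference term is $O(\varepsilon^2)$ because $\varphi-1=O(\varepsilon)$. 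Thus $\int F\,d\mathrm{Leb}<0$ for $\varepsilon>0$ small, and this works for every $k\ne 0$ and every $t$.

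Since $f_{k,t}$ preserves Lebesgue measure, the ergodic decomposition yields an ergodic component $\mu$ with $\int F\,d\mu=-2\delta<0$. Birkhoff's theorem then gives a point $z_0$ with $\sum_{i=1}^{N-1}F(f_{k,t}^i z_0)\le -\delta N$ for all large $N$. For that orbit, $Q(v)$ equals this sum up to the boundary terms $v_1^2+v_{N-1}^2-\bigl(\varphi(x_N)-\varphi(x_{N-1})\bigr)^2$, which are bounded by a constant depending only on $\varepsilon$. Hence $Q(v)<0$ once $N$ is large.

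The step I expect to need the most care is the bookkeeping that matches the determinant and index conventions. One must check that $a_m$ is exactly the leading minor built from $c_1,\dots,c_{m-1}$, with $c_i$ evaluated at $x_i$ rather than $x_{i-1}$. One must also check that the telescoped form of $Q$ with its boundary terms agrees with the Birkhoff sum of $F$. The conceptual core — negative Lebesgue average of $F$, hence an ergodic component and a generic orbit along which $Q<0$ — should go through as described. By Lemma \ref{twistcondn} and Corollary \ref{corolimp}, this lemma then finishes the irrational case of Theorem \ref{main}.
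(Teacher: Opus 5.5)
Your proof is correct and has the same skeleton as the paper's: a Jacobi matrix whose leading minors are the $a_m$, Sylvester's criterion with the first-nonpositive-minor trick, and the test vector $1-\varepsilon\cos(2\pi x_j)$ along a Birkhoff-generic orbit of an ergodic component of Lebesgue measure. The one real difference is that you apply the ergodic decomposition to the single observable $F$, whose Lebesgue average you show directly is negative. This removes the paper's case split (a component with $\int\cos(2\pi x)\,d\mu<0$ and a constant test vector, versus $\int\cos(2\pi x)\,d\mu=0$ with $\int\cos^2(2\pi x)\,d\mu>0$). It also removes the $\theta$-bookkeeping and, through $\mathrm{sgn}(k)$, the reduction to $k>0$.
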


\begin{proof} 
As $f_{k,t}$ is conjugate to $f_{-k,t}$ by the map $\varphi
(x,y)=(x+1/2$ mod 1$,y$ mod 1$),$ it suffices to consider $k>0$ and $t\in %
\mathbb R.$
The proof is based on a variational argument for a tridiagonal matrix.
\vskip0.1truecm

\textit{Part 1. The tridiagonal matrix associated with the recurrence.}

Fixed some $(x_{0},y_{0})\in \mathbb T^{2},$ the points $%
(x_{1},y_{1}),(x_{2},y_{2}),...,(x_{N},y_{N}),...$ are its orbit under $f_{k,t}.$

For an integer $N\geq 1$, consider the symmetric $N\times N$ matrix 
\begin{equation}
H_{N}=\begin{pmatrix} 2+2k\pi\cos (2\pi x_1) & -1 & 0 & \cdots &0\\
-1&2+2k\pi\cos (2\pi x_2)&-1&\ddots&\vdots\\ 0&-1&2+2k\pi\cos (2\pi
x_3)&\ddots&0\\ \vdots&\ddots&\ddots&\ddots&-1\\ 0&\cdots&0&-1&2+2k\pi\cos
(2\pi x_N) \end{pmatrix}.  \label{HN}
\end{equation}

Let $D_{N}:=\det H_{N}$, and put $D_{-1}:=0$ and $D_{0}:=1$. Expansion of
the determinant along the last row gives 
\begin{equation}
D_{N}=\left[ 2+2k\pi \cos (2\pi x_{N})\right] D_{N-1}-D_{N-2}.
\label{detrec}
\end{equation}
Since $a_{0}=0,$ $a_{1}=1$ and $a_{n+1}=\left[ 2+2k\pi \cos (2\pi x_{n})\right]
a_{n}-a_{n-1},$ we get that $D_{N}=a_{N+1}.$


\vskip0.1truecm
\textit{Part 2. The search for }$N\geq 1,$ $(x_{0},y_{0})\in \mathbb T^{2}$
and $v\in \mathbb R^{N}$ such that $\langle H_{N}v,v\rangle <0.$

Fixed some $(x_{0},y_{0})\in \mathbb T^{2},$  $N\geq 1$ and $v=(v_{1},\ldots
,v_{N})\in \mathbb R^{N}$, the quadratic form associated to $H_{N}$ is 
\begin{eqnarray}
\langle H_{N}v,v\rangle&=&\sum_{j=1}^{N}\left[ 2+2k\pi \cos (2\pi
x_{j})\right] v_{j}^{2}-2\sum_{j=1}^{N-1}v_{j}v_{j+1} \\
&=&\sum_{j=0}^{N}(v_{j+1}-v_{j})^{2}+2k\pi \sum_{j=1}^{N}\cos (2\pi
x_{j})\,v_{j}^{2},  \label{quadratic}
\end{eqnarray}
where we impose that $v_{0}=v_{N+1}=0$.

We shall show that this quadratic form is negative for a suitable orbit and
a suitable vector $v.$

As $f_{k,t}$ preserves Lebesgue measure, 
\begin{eqnarray}
\int_{\mathbb T^{2}}\cos (2\pi x)d\text{Leb}=0\text{ and }\int_{\mathbb %
T^{2}}\cos ^{2}(2\pi x)d\text{Leb}>0,\text{ we get that }  \label{valori}
\end{eqnarray}
for some $f_{k,t}$-invariant ergodic measure $\mu $ in the ergodic
decomposition of Lebesgue, one of the possibilities below must hold:

\begin{enumerate}
\item  $\int_{\mathbb T^{2}}\cos (2\pi x)d\mu <0;$

\item  $\int_{\mathbb T^{2}}\cos (2\pi x)d\mu =0$ and $\int_{\mathbb T^{2}}\cos
^{2}(2\pi x)d\mu >0;$
\end{enumerate}

To see this, let $\eta$ denote the probability measure on the space 
of $f_{k,t}$-invariant ergodic Borel probability measures representing the ergodic decomposition 
of Lebesgue measure. If case 1 does not occur, then

$$
\int_{\mathbb T^2}\cos(2\pi x)d\nu\geq 0
$$
for $\eta$-almost every $\nu$. By (\ref{valori}) and the ergodic decomposition formula,
$$
0=\int_{\mathbb T^2}\cos(2\pi x)\,d\mathrm{Leb}
=\int\left(\int_{\mathbb T^2}\cos(2\pi x)\,d\nu\right)d\eta(\nu).
$$

Thus $\int_{\mathbb T^2}\cos(2\pi x)d\nu=0$ for $\eta$-almost every $\nu$. Moreover,

$$
0<\int_{\mathbb T^2}\cos^2(2\pi x)d\mathrm{Leb}
=\int\left(\int_{\mathbb T^2}\cos^2(2\pi x)d\nu\right)d\eta(\nu),
$$
so the set of ergodic measures $\nu$ for which 
$\int_{\mathbb T^2}\cos^2(2\pi x)d\nu>0$ has positive $\eta$-measure. 
Consequently, there exists an ergodic measure $\mu$ satisfying both conditions in case 2.


Let us deal with both possibilities.

In the first, picking some $\mu $-generic point $(x_{0},y_{0})$, Birkhoff's
ergodic theorem gives 
\[
\frac{1}{N}\sum_{j=1}^{N}\cos (2\pi x_{j})\stackrel{N\rightarrow \infty }{%
\longrightarrow }\int_{\mathbb T^{2}}\cos (2\pi x)\,d\mu <0.
\]
So, take $v_{j}=1$ for $1\leq j\leq N$ and $v_{0}=v_{N+1}=0$. From
expression (\ref{quadratic}), 
\[
\langle H_{N}v,v\rangle =2+2k\pi \sum_{j=1}^{N}\cos (2\pi x_{j}).
\]
From the above, $\sum_{j=1}^{N}\cos (2\pi x_{j})$ goes to $-\infty$ as 
$N\rightarrow \infty .$ So, for sufficiently large $%
N>0,$ 
\[
\langle H_{N}v,v\rangle <0.
\]

The second possibility is more complicated. Pick a $\mu $-generic point $%
(x_{0},y_{0}).$ Then Birkhoff's ergodic theorem gives 
\[
\frac{1}{N}\sum_{j=1}^{N}\cos (2\pi x_{j})\stackrel{N\rightarrow \infty }{%
\longrightarrow }\int_{\mathbb T^{2}}\cos (2\pi x)\,d\mu =0\text{ and }\frac{%
1}{N}\sum_{j=1}^{N}\cos ^{2}(2\pi x_{j})\stackrel{N\rightarrow \infty }{%
\longrightarrow }\int_{\mathbb T^{2}}\cos ^{2}(2\pi x)\,d\mu :=M_{\mu }>0%
\text{ }.
\]

Remember that 
\[
\langle H_{N}v,v\rangle =\sum_{j=0}^{N}(v_{j+1}-v_{j})^{2}+2k\pi
\sum_{j=1}^{N}\cos (2\pi x_{j})\,v_{j}^{2},\text{ where we impose that }%
v_{0}=v_{N+1}=0.\text{ } 
\]
Let $N>0$ be an integer and let $0<\varepsilon <1,$ to be chosen later. Now
define $v_0=v_{N+1}=0$ and for $1\leq j\leq N:$


$$v_{j}:=\left( 1-\varepsilon
\cos (2\pi x_{j})\right) $$



\[
\text{Then } \sum_{j=0}^{N}(v_{j+1}-v_{j})^{2}<4+\sum_{j=1}^{N-1}(v_{j+1}-v_{j})^{2}+4<
\]

\[
<8+\sum_{j=1}^{N-1}4\varepsilon^{2}<8+4\varepsilon^{2}N.
\]

And for the second term, the following holds:

\[
v_{j}^{2}=(1-\varepsilon \cos (2\pi x_{j}))^{2}=1-2\varepsilon \cos (2\pi
x_{j})+\varepsilon ^{2}\cos ^{2}(2\pi x_{j}). 
\]
So, 
\[
\cos (2\pi x_{j})v_{j}^{2}=\cos (2\pi x_{j})-2\varepsilon \cos ^{2}(2\pi
x_{j})+\varepsilon ^{2}\cos ^{3}(2\pi x_{j}). 
\]


Consequently, 
{\small
\begin{eqnarray}
\sum_{j=1}^{N}\cos (2\pi x_{j})v_{j}^{2}\leq\sum_{j=1}^{N}\left[ \cos (2\pi x_{j})-2\varepsilon\cos
^{2}(2\pi x_{j})+\varepsilon ^{2}\cos ^{2}(2\pi x_{j})
\right].  \label{primestim}
\end{eqnarray}
}
From the choice of the measure $\mu$, for every $\theta >0,$ there exists $N_{0}=N_{0}(\theta )>0$
sufficiently large such that, for $N\geq N_{0},$ the
following estimates hold:

\[
\frac{1}{N} \left|\sum_{j=1}^{N}\cos (2\pi x_{j})\right|  <\theta M_{\mu } 
\]

\[
\frac{1}{N}\sum_{j=1}^{N}\cos ^{2}(2\pi x_{j})>\frac{9}{10}M_{\mu } 
\]

So, the estimate in expression (\ref{primestim}) gives: 
\[
\sum_{j=1}^{N}\cos (2\pi x_{j})v_{j}^{2}<\theta NM_{\mu }-\varepsilon \frac{%
9}{5}NM_{\mu }+\varepsilon ^{2}NM_{\mu }= 
\]

\[
=NM_{\mu }\left( \theta -\frac{9}{5}\varepsilon +\varepsilon ^{2}\right) . 
\]
And putting both estimates together, we obtain:

\[
\sum_{j=0}^{N}(v_{j+1}-v_{j})^{2}+2k\pi \sum_{j=1}^{N}\cos (2\pi
x_{j})\,v_{j}^{2}<
8+2k\pi NM_{\mu }\left( \theta -\frac{9}{5}\varepsilon +\varepsilon
^{2}\left( 1+\frac{2}{M_{\mu }k\pi}\right) \right). 
\]
 
As the vertex coordinate of the above polynomial is 
$\frac{9}{10\left( 1+\frac{2}{M_{\mu }k\pi}\right)}$, which belongs to $(0,1),$ 
choosing 
\[
0<\theta <\frac{81}{100\left( 1+\frac{2}{M_{\mu }k\pi}\right)}, 
\]
we get 
an interval $(\varepsilon _{1},\varepsilon _{2})\subset (0,1)$ such
that for any $\varepsilon \in (\varepsilon _{1},\varepsilon _{2}),$ the polynomial 
\[
\theta -\frac{9}{5}\varepsilon +\varepsilon ^{2}\left( 1+\frac{2}{M_{\mu }k\pi}\right) <0.
\]
Therefore, for the above choice of $\theta,$ large enough $N>0$ and 
$\varepsilon \in (\varepsilon _{1},\varepsilon _{2}),$ in this second possibility we also obtain that 
$$
\langle H_{N}v,v\rangle =\sum_{j=0}^{N}(v_{j+1}-v_{j})^{2}+2k\pi
\sum_{j=1}^{N}\cos (2\pi x_{j})\,v_{j}^{2}<0.
$$ 
\vskip0.1truecm

\textit{Part 3. A negative }$D_{j}.$

In Part 2 above, we found $(x_{0},y_{0})\in \mathbb T^{2},$ $N\geq 1$ and $%
v\in \mathbb R^{N}$ such that $\langle H_{N}v,v\rangle <0.$ And if all $%
D_{1},\ldots ,D_{N}$ were positive, Sylvester's criterion would imply that $%
H_{N}$ is positive definite, a contradiction. Thus for some $1\leq j\leq N,$
the determinant $D_{j}\le 0$. 
Let $1\leq j^* \leq N$ be the first index with $D_{j^*}\le 0$. If $D_{j^*}<0$, we
are done. And if $D_{j^*}=0$, then 
\[
D_{j^*+1}=(2+2k\pi\cos (2\pi x_{j^*+1}))D_{j^*}-D_{j^*-1}=-D_{j^*-1}<0.
\]
Hence the lemma is proved, because for all integers $i\geq 0,$ $a_{i+1}=D_{i}.$
\end{proof}

And the main theorem is also proved.

\end{document}